\newcommand{\vanish}[1]{}
\theoremstyle{definition}
\newtheorem{definition}{Definition}
\newtheorem{proposition}[definition]{Proposition}
\newtheorem{theorem}[definition]{Theorem}
\newtheorem{corollary}[definition]{Corollary}
\newtheorem{example}[definition]{Example} 
\newcommand{\bsy}{\boldsymbol}
\title{On the Limiting Vacillating Tableaux for Integer Sequences}
\author{Zhanar Berikkyzy}
\author{Pamela E.~Harris}
\author{Anna Pun}  
\author{Catherine Yan}
\author{Chenchen Zhao} 
\address[Berikkyzy]{Department of Mathematics, Fairfield University, Fairfield, CT 06824 } 
\email{zberikkyzy@fairfield.edu}
\address[Harris]{Department of Mathematical Sciences, University of Wisconsin Milwaukee, Milwaukee, WI 53211} 
\email{peharris@uwm.edu}
\address[Pun]{Department of Mathematics, CUNY Baruch College, New York, NY 10010} 
\email{anna.pun@baruch.cuny.edu} 
\address[Yan]{Department of Mathematics, Texas A\&M University, College Station, TX 77843}
\email{huafei-yan@tamu.edu}  
\address[Zhao]{Department of Mathematics, University of Southern California, Los Angeles, CA 90089} 
\email{zhao109@usc.edu}
\date{\today}
\begin{document}
\maketitle 

\begin{abstract}
A fundamental identity in the representation theory of the partition algebra is 
$n^k = \sum_{\lambda} f^\lambda m_k^\lambda$ for $n \geq 2k$, 
where $\lambda$ ranges over integer partitions of $n$, $f^\lambda$ is the number of standard Young tableaux of shape $\lambda$, and $m_k^\lambda$ is the number of vacillating tableaux of shape $\lambda$ and length $2k$. 
Using a combination of RSK insertion and jeu de taquin,  Halverson and Lewandowski constructed a bijection $DI_n^k$ that maps each integer sequence in $[n]^k$ to a pair consisting of a standard Young tableau and a vacillating tableau. In this paper, we show that for a given integer sequence $\bsy{i}$, when $n$ is sufficiently large, the vacillating tableaux determined by $DI_n^k(\bsy{i})$ become stable when $n \rightarrow \infty$; the limit is called the limiting vacillating tableau 
for $\bsy{i}$.  We give a characterization of the set of limiting vacillating tableaux   and present explicit formulas that enumerate those  vacillating tableaux. 

\noindent 
\textbf{AMS Classification}: 05A18, 05A15, 05E10 \\ 
\textbf{Keywords}: vacillating tableaux, integer sequences 

\end{abstract}

\section{Introduction}

In \cite{HL05} Halverson and Lewandowski presented combinatorial proofs  of two identities arising in the representation theory of the partition algebra $\mathcal{C}A_k(n)$.   One of the  identities is 
that for $n \geq 2k$, 
\begin{equation} \label{eq: Identity1} 
    n^k=\sum_{\lambda\vdash n} f^\lambda m_k^\lambda, 
\end{equation}
where the sum is over integer partitions $\lambda$ of $n$, $f^\lambda$ is the number of standard Young tableaux (SYT) of shape $\lambda$, and $m_k^\lambda$ is the number of vacillating tableaux of shape $\lambda$ and length $2k$, whose definition is given below.  To emphasize the dependency on the parameter $n$, 
we call such tableaux $n$-vacillating tableaux. 

\begin{definition} \cite{HL05} 
For $n \geq 2k$, an \emph{$n$-vacillating tableau of shape $\lambda$ and length $2k$} is a sequence of $2k+1$ integer partitions 
\[
((n)=\lambda^{(0)}, \lambda^{(\frac{1}{2})}, \lambda^{(1)}, \lambda^{(1\frac{1}{2})}, \dots, \lambda^{(k-\frac{1}{2})}, \lambda^{(k)}=\lambda), 
\]
satisfying the conditions that for each integer $j$,
\begin{enumerate}[(a)]
    \item $\lambda^{(j)} \supseteq  \lambda^{(j+\frac12)}$ and 
    $| \lambda^{(j)} / \lambda^{(j+\frac12)}|=1$, 
    \item $\lambda^{(j+\frac12)}\subseteq \lambda^{(j+1)}$ and 
    $|\lambda^{(j+1)}/ \lambda^{(j+\frac12)}|=1$. 
    \end{enumerate} 
    \end{definition} 
Note that the above conditions imply that $\lambda^{(j)} \vdash n$ and $\lambda^{(j+\frac12)} \vdash (n-1)$, 
where for a partition $\lambda=(\lambda_1, \dots, \lambda_t)$, $\lambda \vdash n$ means that $\lambda$ is an integer partition of $n$; i.e., 
$|\lambda|:=\lambda_1+ \cdots +\lambda_t =n$. 

 \begin{example} \label{Example-VT1} For $n = 7$ and $k=3$, the following sequence represents an $n$-vacillating tableau of shape $\lambda = (5,2)$ and length $2k$: \[\left(\ytableausetup{boxsize=7px}\ydiagram{7} , \ \ydiagram{6} , \ \ydiagram{6,1} , \  \ydiagram{5,1} , \  \ydiagram{5,2} , \ \ydiagram{4,2} , \  \ydiagram{5,2}\right).\]
\end{example} 
 
Let $\mathcal{VT}_{n,k}(\lambda)$ denote the set of $n$-vacillating tableaux of shape $\lambda$ and  length $2k$. Note that $\mathcal{VT}_{n,k}(\lambda)$ is empty if $\lambda$ has more than $k$ boxes below the first row. 
The bijective proof of \eqref{eq: Identity1}  given 
in \cite{HL05} 
 uses a combination of the Robinson-Schensted-Knuth (RSK) 
 insertion algorithm and jeu de taquin. 
For any integer sequence $\bsy{i}=(i_1, i_2, \dots, i_k) \in [n]^k$ where $[n]=\{1, 2, \dots, n\}$, Halverson and Lewandowski define an iterative ``delete-insert" process that  associates to $\bsy{i}$ a pair $DI_n^k(\bsy{i})=(T^k_{n}(\bsy{i}), P^k_{n}(\bsy{i}))$, where  $T^k_{n}(\bsy{i})$ is a SYT  of some shape $\lambda \vdash n$ and $P^k_{n}(\bsy{i})\in \mathcal{VT}_{n,k}(\lambda)$.

The paper \cite{HL05} contains an equivalent notion of vacillating tableaux that does not use the parameter $n$, which was  originally introduced in \cite{CDDSY07} to study the crossings and nestings of matchings and set partitions. To distinguish from the $n$-vacillating tableaux, we call this second notion the \emph{simplified vacillating tableaux}. 
Given an integer partition $\lambda=(\lambda_1, \lambda_2, \dots, \lambda_t) \vdash n$, 
let $\lambda^*$ be obtained from $\lambda$ by removing the first part $\lambda_1$, i.e., 
$\lambda^*=(\lambda_2, \dots, \lambda_t) \vdash (n-\lambda_1)$.
For an $n$-vacillating tableau $P_\lambda= ( \lambda^{(j)}: j=0, \frac{1}{2}, 1, 1\frac{1}{2}, \dots, k)$ in $\mathcal{VT}_{n,k}(\lambda)$, the \emph{simplified vacillating tableau $(P_{\lambda})^*$} is the sequence 
$( \mu^{(j)}: \mu^{(j)}= (\lambda^{(j)})^* \text{ for  } j=0, \frac{1}{2}, 1, 1\frac{1}{2}, \dots, k)$. 
One can also define the simplified vacillating tableaux  directly in terms of integer partitions. 

\begin{definition} \cite{CDDSY07}
A \emph{simplified vacillating tableau}  $P_k^*(\mu)$ of shape $\mu$  and length $2k$  is a sequence $(\mu^{(j)}: j = 0, \frac{1}{2}, 1, 1\frac{1}{2}, \dots, k)$ of integer partitions such that $\mu^{(0)} = \emptyset$, $\mu^{(k)} = \mu$, and for each integer $j = 0, 1, \dots, k-1$:
\begin{enumerate}[(a)]
\item $\mu^{(j)} \supseteq  \mu^{(j+\frac12)}$ and 
    $| \mu^{(j)} / \mu^{(j+\frac12)}|=0$ or $1$, 
    \item $\mu^{(j+\frac12)}\subseteq \mu^{(j+1)}$ and 
    $|\mu^{(j+1)}/ \mu^{(j+\frac12)}|=0$ or $1$. 
\end{enumerate}
 \end{definition} 

Note that the definition forces 
$\mu^{(\frac{1}{2})}=\emptyset$. 
The following is the simplified vacillating tableau corresponding to the sequence in Example~\ref{Example-VT1}: \[\left(\ytableausetup{boxsize=7px}\emptyset , \  \emptyset , \ \ydiagram{1}, \  \ydiagram{1} , \  \ydiagram{2}, \  \ydiagram{2}, \ \ydiagram{2}\right).\] 

 For an integer partition $\mu$ of at most $k$ boxes, 
denote by $\mathcal{SVT}_k(\mu)$ the set of simplified vacillating tableaux of shape $\mu$ and length $2k$. 
If $n \geq 2k$, then there is a unique $\lambda \vdash n$ such that $\mu = \lambda^*$ and 
 $P_\lambda \leftrightarrow (P_\lambda)^*$ is a bijection between $\mathcal{VT}_{n,k}(\lambda)$ and $\mathcal{SVT}_k(\mu)$.  Hence the two notions of vacillating tableaux are equivalent.

Vacillating tableaux  play an important role in both
representation theory and combinatorics.  The notion of $n$-vacillating tableaux was introduced as part of the image of the 
 delete-insert process $DI_n^k$, which gave a combinatorial analogue of the Schur-Weyl duality between the symmetric group algebra and the partition algebra. Building upon this, 
Benkart, Halverson and Harman \cite{BHH17} generalized the result  to various analogs of the partition algebra  and obtained explicit formulas for the dimensions  of the irreducible modules for symmetric group centralizer algebras. 
Recently, 
the Schur-Weyl-like duality  has led to exciting developments in the representation theory of partition algebras and the invariant theory of the symmetric group. Several papers, including \cite{BH17, COSSZ,HJ18},  
have explored these new results in detail. 

From a combinatorial perspective, vacillating tableaux can be viewed as specific walks on Young's lattice, which is the poset of all integer partitions ordered by containment of diagrams. A closely related concept is the \emph{up-down tableaux} or \emph{oscillating tableaux}, which are sequences of integer partitions starting with the empty shape and in which consecutive partitions differ by one square. Up-down tableaux were originally introduced by Berele \cite{Berele86} in 1986 to describe the irreducible representations of the symplectic group $Sp(2k)$ and can be identified as a subfamily of vacillating tableaux. Stanley established a bijection between complete matchings on $[2k]$ and up-down tableaux of empty shape and length $2k$, which was later extended by Sundaram \cite{Sunderam90} 
to arbitrary shapes, providing  a combinatorial proof of the Cauchy identity for the symplectic group.
 Following this idea, Chen et.al.~\cite{CDDSY07}  introduced the (simplified) vacillating tableaux 
and gave a bijection between  set partitions and vacillating tableaux of empty shape, which
 demonstrated the symmetry between the maximal crossings and the maximal nestings in set partitions. 
This idea was further extended to fillings of Ferrers shapes, stack polyominoes, and moon polyominoes, under 
the language of growth diagrams,  to describe the properties of monotone subsequences in those fillings. 
For more information, see \cite{GP2020,Kratten06, Rubey11}. 

Given their versatility in algebra and combinatorics, it is crucial to understand the properties of vacillating tableaux and their relationships with other mathematical objects.
The objective of this short paper is to study the vacillating tableaux defined by the map $DI_n^k$ for integer sequences, and it is more convenient to use the simplified notion to describe our result. 
For a fixed integer sequence $\bsy{i}\in [n]^k$,  one observes that 
$P^k_{m}(\bsy{i})$ can be different from
$P^k_{m+1} (\bsy{i})$, even if one considers the corresponding simplified version. However, we  show 
in Section~\ref{sec:2} that 
when $m$ is sufficiently large, the simplified vacillating tableaux  $(P^k_{m}(\bsy{i}))^*$
become  stable, which is denoted by $P^*(\bsy{i})$ and 
is called the limiting vacillating tableau 
of $\bsy{i}$.  We will give a characterization of the set of limiting vacillating tableaux.   In Section~\ref{sec:3},
we enumerate the number of limiting vacillating tableaux of length $2k$. First, we express this number as a sum of Stirling numbers of the second kind and the number of involutions in 
the symmetric group, and then we prove that this coincides with the sequence \href{https://oeis.org/A004211}{A004211} in OEIS \cite{OEIS}, which counts certain bi-colored set partitions.

\section{Limiting vacillating tableaux}\label{sec:2}

First, we recall the bijection $DI_n^k$ of \cite{HL05} to give the
exact construction of the $n$-vacillating tableau $P^k_{n}(\bsy{i})$ for an integer sequence $\bsy{i}$. 
 In the following  an integer partition is visually represented by the Young diagram, which contains $\lambda_j$ boxes in the $j$-th row.  We use English notation in which the diagrams are aligned in the upper-left corner.
 A \emph{partial tableau} is an integer partition whose boxes  are filled  with distinct positive integers that are increasing in rows and columns. 
The set of integers in the boxes of a partial tableau $T$, denoted content$(T)$,  is call the \emph{content} of~$T$. 
A \emph{standard Young tableau} with $n$ boxes is a partial tableau whose content is exactly~$[n]$.

The  main ingredients of the bijection $DI_n^k$ are the RSK row insertion algorithm and a special case of  Sch\"{u}tzenberger's jeu de taquin algorithm, which removes a box containing an entry $x$ from a partial tableau and produces a new partial tableau. We adopt the description from \cite{HL05}, while the full version and in-depth discussion of these algorithms can be found in \cite[Chapter 3]{Sagan01} or \cite[Chapter 7]{EC2}.

\medskip 

\noindent 
\underline{The RSK row insertion}. Let $T$ be a partial tableau of partition shape $\lambda$ with $|\lambda| <n$ and with district entries from $\{1,2,\dots, n\}$. Let $x$ be a positive integer that is not in $T$. The operation $x \xrightarrow{RSK}  T$ is defined as follows. 
\begin{enumerate}[(a)]
    \item Let $R$ be the first row of $T$. 
    \item While $x$ is less than some element in $R$, do 
       \begin{enumerate}[label=\roman*)]
           \item Let $y$ be the smallest element of $R$ greater than $x$; 
           \item Replace $y \in R$ with $x$;
           \item Let $x:=y$ and let $R$ be the next row. 
       \end{enumerate}
    \item Place $x$ at the end of $R$ (which is possibly empty). 
\end{enumerate}
The result is a partial tableau of shape $\mu$ such that $|\mu /\lambda|=1$. For each occurrence of step (b), we say that $x$ \emph{bumps} $y$ to the next row.  

\medskip 

\noindent 
\underline{Jeu de taquin}. \ 
Let $T$ be a partial tableau of partition shape $\lambda$  with distinct entries in $\{1,2,\dots, n\}$. Let $x$ be an entry in $T$.  The following operation will delete $x$ from $T$ and yield a partial tableau $S$.
\begin{enumerate}[(a)]
    \item Let $c=T_{i,j}$  be the box of $T$ containing $x$, 
    which is  the $j$-th box in the $i$-th row of $T$. 
    \item While $c$ is not a corner, i.e., a box both at the end of a row and the end of a column, do
      \begin{enumerate}[label=\roman*)]
          \item Let $c'$ be the box containing $\min\{T_{i+1,j}, T_{i, j+1}\}$; if only one of $T_{i+1,j}, T_{i, j+1}$ exists, then the minimum is taken to be that single entry; 
          \item Exchange the positions of $c$ and $c'$. 
      \end{enumerate}
    \item Delete $c$. 
\end{enumerate}
We denote this process by $x \xleftarrow{jdt} T$. 

It is well-known that both of the above two processes are invertible. Examples  can be found in  \cite[Section 3]{HL05}. 
The bijection $DI_n^k$ from integer sequences to the set of pairs 
consisting of a SYT and an $n$-vacillating tableau is built by iterating alternatively between the two processes.

\medskip 

\noindent 
\underline{The bijection $DI_n^k$.} \ 
Let $(i_1, i_2, \dots, i_k) \in [n]^k$ be an integer sequence of length $k$.  First we define a sequence of partial tableaux recursively: the $0$-th
tableau is the unique SYT of shape $(n)$ with the filling $1, 2, \dots, n$, namely, 
\begin{center} 
\begin{tikzpicture}
\node[left] at (0,0) {$T^{(0)} =$}; 
\draw (0,-.25) rectangle (3,.25); 
\draw (0.5, -.25)--(0.5,0.25) (1,-0.25)--(1,0.25) (2.5, -0.25)--(2.5, 0.25); 
\node at (0.25,0) {$1$}; 
\node at (0.75, 0) {$2$}; 
\node at (1.75,0) {$\dots$}; 
\node at (2.75,0) {$n$}; 
\node at (3.2, -0.2) {.}; 
\end{tikzpicture}
\end{center} 

Then for integers $j=0, 1, \dots, k-1$, the partial tableaux $T^{(j+\frac{1}{2})}$ and $T^{(j+1)}$ are defined by
\begin{eqnarray}
T^{(j+\frac{1}{2})} &= &\left( i_{j+1} \xleftarrow{jdt} T^{(j)} \right), \\ T^{(j+1)}  & = & \left( i_{j+1} \xrightarrow{RSK} T^{(j+\frac{1}{2})} \right). 
\end{eqnarray}
Note that $T^{(j+1)}$ is always a SYT. 
Let $\lambda^{(i)}$ be the shape of $T^{(i)}$ for all indices $i$,   and $\lambda=\lambda^{(k)}$ be the shape of the last partial tableau $T^{(k)}$. Finally, let 
\begin{eqnarray}
T^k_{n}(\bsy{i})=T^{(k)}, \qquad P^k_{n}(\bsy{i})
=\left(  
\lambda^{(0)}, \lambda^{(\frac{1}{2})}, \lambda^{(1)}, \lambda^{(1\frac{1}{2})}, \dots, \lambda^{(k)} 
\right),  
\end{eqnarray}
so that $T^k_{n}(\bsy{i})$ is a SYT of shape $\lambda$ and 
$P^k_{n}(\bsy{i})$ is an $n$-vacillating tableau of shape $\lambda$ and length $2k$.  The image of $\bsy{i}$ under the map $DI_n^k$ is given by  $DI_n^k(\bsy{i}) =  \left(T^k_{n}(\bsy{i}), 
P^k_{n}(\bsy{i})\right)$. 

\ytableausetup{smalltableaux, aligntableaux=center,baseline}
\begin{example} \label{DI_k's}
 Consider the integer sequence $\bsy{i}=(4,4)$.  First let $n=4$ and apply the map $DI_4^2$. The $(T^{(j)})$ sequence is 
 \[\begin{ytableau}
1 &2 &3 &4
\end{ytableau} \ \to\  \begin{ytableau}
1 &2 &3
\end{ytableau}\ \to\  \begin{ytableau}
1 &2 &3 &4
\end{ytableau}\ \to\  \begin{ytableau}
1 &2 &3
\end{ytableau}\ \to\ \begin{ytableau}
1 &2 &3 &4
\end{ytableau}\,.\] Hence for $DI_4^2(\bsy{i})$,  $T^2_{4}(\bsy{i})=$\ytableausetup{smalltableaux}
\begin{ytableau}
1 &2 &3 &4
\end{ytableau}
, and \ytableausetup{smalltableaux}
\[P^2_{4}(\boldsymbol{i}):\left (\ydiagram{4}\,,\  \ydiagram{3}\,,\ \ydiagram{4}\,,\ \ydiagram{3}\,,\   \ydiagram{4} \right).
\] It follows that the simplified vacillating tableau for $P^2_{4}(\bsy{i})$ is $(\emptyset,\emptyset,\emptyset,\emptyset,\emptyset)$.  

For $n = 5$, we apply the map $DI^2_5$ to obtain the $(T^{(j)})$ sequence as follows.
 \[\begin{ytableau}
1 &2 &3 &4 &5
\end{ytableau} \ \to\ \begin{ytableau}
1 &2 &3 &5
\end{ytableau}\ \to\  \begin{ytableau}
1 &2 &3 &4\\
5
\end{ytableau}\ \to\  \begin{ytableau}
1 &2 &3\\
5
\end{ytableau}\ \to\ \begin{ytableau}
1 &2 &3 &4\\
5
\end{ytableau}.\] Similarly, for $n = 6$, we apply the map $DI^2_6$ to obtain the following $(T^{(j)})$ sequence:
\[\begin{ytableau}
1 &2 &3 &4 &5 & 6
\end{ytableau} \ \to\  \begin{ytableau}
1 &2 &3 &5 &6
\end{ytableau}\ \to\  \begin{ytableau}
1 &2 &3 &4 &6\\
5
\end{ytableau}\ \to\  \begin{ytableau}
1 &2 &3 &6\\
5
\end{ytableau}\ \to\ \begin{ytableau}
1 &2 &3 &4\\
5 &6
\end{ytableau}\,.\]
Thus for $DI_5^2(\bsy{i})$,  $T^2_{5}(\boldsymbol{i})= \ytableausetup{smalltableaux} \begin{ytableau}
1 &2 &3 &4\\
5
\end{ytableau}$, and \[ P^2_{5}(\boldsymbol{i})=  \left(\ydiagram{5},\, \ydiagram{4}, \ \ydiagram{4,1},\ \ydiagram{3,1},\ \ydiagram{4,1} \right);\] 
for $DI_6^2(\bsy{i})$,  $T^2_{6}(\boldsymbol{i})= \ytableausetup{smalltableaux}\begin{ytableau}
1 &2 &3 &4\\
5 &6
\end{ytableau}$, and \[ P^2_{6}(\boldsymbol{i})=  \left(\ydiagram{6},\, \ydiagram{5}, \ \ydiagram{5,1},\ \ydiagram{4,1},\ \ydiagram{4,2} \right).\]
     
      
     

 The simplified vacillating tableau for $ \ytableausetup{smalltableaux}
P^2_{5}(\bsy{i})$ 
 is $(\emptyset, \emptyset,  \ydiagram{1}\,,\ydiagram{1}\,, \ydiagram{1})$, while the 
 simplified vacillating tableau  for 
  $P^2_{6}(\bsy{i})$ is 
  $\left(\emptyset, \emptyset, \ydiagram{1}\,,\ydiagram{1}\,, \ydiagram{2}\right)$. 
 \end{example} 

From the above example we see that the vacillating tableaux defined by $DI_n^k(\bsy{i})$ and
$DI_{m}^k(\bsy{i})$ may not be the same, even if we consider the simplified version. 
Nevertheless, we prove that for any  integer sequence $\bsy{i}$, 
the simplified vacillating tableaux 
$(P^k_{m}(\bsy{i}))^*$ eventually stabilize when $m$ is sufficiently large.

\begin{theorem} \label{thm: exist} 
Let $\bsy{i}=(i_1, \dots, i_k) \in [n]^k$ be an integer sequence. For $m > n+2k$, the vacillating tableaux $P^k_{m}(\bsy{i})$
 and $P^k_{m+1}(\bsy{i})$ have the same simplified version, i.e.,~the two vacillating tableaux become the same after removing the first row from each integer partition in the sequences.
\end{theorem}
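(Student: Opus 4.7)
The plan is to couple the two processes $DI_m^k(\bsy{i})$ and $DI_{m+1}^k(\bsy{i})$ and prove by induction on the time parameter $t \in \{0, 1/2, 1, \ldots, k\}$ the following invariant: at every step, the partial tableau $T^{(t)}$ produced in the $(m{+}1)$-process equals the partial tableau $T^{(t)}$ produced in the $m$-process together with one extra cell appended at the right end of row~$1$ containing the entry $m+1$. Granting this invariant, the shapes of the two tableaux at every time~$t$ agree off the first row, so the simplified vacillating tableaux coincide, and the theorem follows. The base case $t=0$ is immediate from the explicit form of the initial tableau.

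For the jeu de taquin step on $i_{j+1} \leq n$, the entry being deleted lies in the common part of the two tableaux, and the slide is driven by comparisons of the form $\min(T_{r+1,c},T_{r,c+1})$ whose outcomes can differ only when the sliding cell sits at position $(1,L_j)$, where $L_j$ denotes the length of row~$1$ in the $m$-tableau at that time. If the slide in the $m$-process exits row~$1$ downward at some earlier column, then the extra cell containing $m+1$ plays no role and the two slides agree. Otherwise the slide proceeds rightward all the way to $(1,L_j)$, which is a corner in the $m$-tableau; in the $(m{+}1)$-tableau this cell is not a corner because $(1,L_j{+}1)$ exists, so exactly one additional swap moves the sliding entry to $(1,L_j{+}1)$ and displaces $m+1$ to $(1,L_j)$. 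The cell $(1,L_j{+}1)$ is then a corner, deletion removes it, and the resulting tableau is precisely the $m$-output with $m+1$ at the new rightmost cell of row~$1$. In either case the invariant survives.

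For the RSK insertion of $i_{j+1} \leq n$, the bump target in row~$1$ is the smallest entry strictly greater than $i_{j+1}$. If some entry $y$ of row~$1$ in the $m$-tableau exceeds $i_{j+1}$, the same $y$ is bumped in both processes (because $y \leq m < m+1$), and the rest of the insertion takes place in rows~$\geq 2$ where the tableaux agree, so the invariant is maintained. The only way to break the invariant is if every entry of row~$1$ in the $m$-tableau were $\leq i_{j+1}$, forcing the $(m{+}1)$-process to bump $m+1$ itself into row~$2$. I would rule this out using the easy size bound $|\mu^{(t)}| \leq k$ on the simplified shape at any time~$t$ (each half-step changes the size by at most one), which implies that rows~$\geq 2$ of the $m$-tableau contain at most $k$ cells. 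Since the entries $n{+}1, n{+}2, \ldots, m$ start in row~$1$ and their total number is $m - n > 2k$, at least $m - n - k > k$ of them remain in row~$1$ at every time, and each exceeds $i_{j+1} \leq n$, so a valid bump target other than $m+1$ is always available.

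The main obstacle is the boundary case of the jeu de taquin analysis: when the slide in the $m$-process terminates at the rightmost cell of row~$1$, the extra cell in the $(m{+}1)$-tableau forces an additional slide step, and the bookkeeping required to confirm that $m+1$ settles precisely at the new rightmost cell while everything else is untouched is the technical heart of the argument. The RSK step is comparatively routine once the quantitative \emph{many large entries remain in row~$1$} observation is in place; the hypothesis $m > n + 2k$ enters there to guarantee this bound across all $2k$ half-steps of the process.
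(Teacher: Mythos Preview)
Your proposal is correct and follows essentially the same route as the paper: prove by induction on the half-step index that the $(m{+}1)$-tableau equals the $m$-tableau with $m{+}1$ appended at the end of row~1, splitting the jeu de taquin analysis into the two cases (slide ends at the rightmost cell of row~1 vs.\ not) and handling RSK by showing a bump target strictly smaller than $m{+}1$ always exists in row~1. One small organizational point: the fact that $(1,L_j)$ is a corner in the $m$-tableau (equivalently, row~1 is strictly longer than row~2) is needed already in your jeu de taquin case analysis, so you should invoke the bound $|\mu^{(t)}|\le k$ (hence row~2 has length at most $k<m-k\le L_j$) \emph{before} that step rather than only in the RSK paragraph; the paper establishes this inequality up front for exactly this reason.
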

\begin{proof} 
Consider the sequences of partial tableaux $(T_m^{(j)})$ and $(T_{m+1}^{(j)})$
determined by the maps $DI_m^k(\bsy{i})$ and $DI_{m+1}^k(\bsy{i})$, respectively. We will prove by induction that $T_{m+1}^{(j)}$ can be obtained from $T_m^{(j)}$ by adding the entry $m+1$ at the end of the first row,
for all $j$ such that $0 \leq j \leq k$ and $j \in \mathbb{Z}/2$.
The claim is obviously true for $j=0$. 
{In the following, we let $j$ represent integers only and prove that if the claim is true for $j$, then it is true for the indices $j+\frac{1}{2}$ and $j+1$. } 

For any partial tableau involved, there are at most $k$ boxes below the first row. 
Thus the length of the first row is at least $m-k$, which is larger than $n+k$ and is strictly larger than the length of the second row.
When an entry $i_j \leq n$  is inserted 
in the application of either $DI_m^k$ or $DI_{m+1}^k$,  it 
can  bump at most one integer in $[n+1, n+k]$  to the second row. Hence  any integer larger than  $n+k$ 
 stays in the first row in all the partial tableaux  during the delete-insert process. 

For integers $j=0, 1, \dots, k-1$, 
\begin{enumerate}
    \item Consider the tableau $T_*^{(j+\frac{1}{2})}$ obtained by $(i_{j+1} \xleftarrow{jdt} T_*^{(j)})$, where $*$ is $m$ or $m+1$. 
    
    \begin{enumerate} 
    \item If the shapes of $T_m^{(j +\frac{1}{2})}$ and $T_m^{(j)}$ differ by a corner box in the first row, then 
    the jeu de taquin process only shifts boxes in the first row of $T_m^{(j)}$. Since the first row of 
    $T_m^{(j)}$ is strictly longer than its second row, and $T_{m+1}^{(j)}$ has one more entry, $m+1$, at the end of the first row, applying jeu de taquin to $T_{m+1}^{(j)}$  will only involve the boxes in the first row and have $m+1$ at the end,  and hence leave the simplified tableau unaffected.

    \item If the shape of $T_m^{(j +\frac{1}{2})}$ and $T_m^{(j)}$ differ by a corner box not in the first row, then the boxes exchanged in $(i_{j+1} \xleftarrow{jdt} T_{m+1}^{(j)})$ 
    are exactly the same as those boxes exchanged in $(i_{j+1} \xleftarrow{jdt} T_{m}^{(j)})$, which do not involve the box containing  $m$. Hence, the entry $m+1$ in $T_{m+1}^{(j)}$ stays at  the end of the first row and has never been involved  in the deleting process. 
    \end{enumerate} 
    
    \item Consider the tableau  $T_*^{(j+1)}$ obtained by $(i_{j+1} \xrightarrow{RSK} T_*^{(j+\frac{1}{2})})$, where $*$  is $m$ or $m+1$. 
    
    Since the entry $m$ is in the same position of the first row of $T_m^{(j+\frac{1}{2})}$ and $T_{m+1}^{(j+\frac{1}{2})}$, and $i_{j+1} \leq n < m$, we know $i_{j+1}$ will bump an entry no larger than $ n+j+1$ to the second row in the RSK insertion. In other words, all the entries larger than $n+j+1$ 
    in $T_{m+1}^{(j+1)}$ 
    will stay in the first row, including $m+1$.   
\end{enumerate}

 Now removing the first rows from $(T_m^{(j)})$ 
 and $(T_{m+1}^{(j)})$ for all $j$ will give us the same integer partition sequence; that is,   $P^k_{m}(\bsy{i})$
 and $P^k_{m+1}(\bsy{i})$ have the same simplified version. 
\end{proof} 

For any integer sequence $\bsy{i} \in [n]^k$, let $P^*(\bsy{i})$ be the simplified vacillating tableau $\left(P^k_{m}(\bsy{i})\right)^*$ when $m > n+2k$. By Theorem \ref{thm: exist}, $P^*(\bsy{i})$ is well-defined.  We call $P^*(\bsy{i})$ the limiting vacillating tableau  of the integer sequence $\bsy{i}$. It is in the set 
$\mathcal{SVT}_k(\mu)$ for some 
 ending shape $\mu$ with at most
$k$ boxes.

Next result gives a characterization of the 
limiting vacillating tableaux for integer sequences of length $k$.   In Section 3, we give explicit formula for   the number of such vacillating tableaux. 
Note that two integer sequences may have  the same 
limiting vacillating tableau, for example, any sequence 
$(a, b)$ with $a > b$ has the limiting vacillating tableau 
$(\emptyset, \emptyset, \square,  \square, \ytableausetup{aligntableaux=center,boxsize=7px}\ydiagram{1,1} )$. Nevertheless, not all simplified vacillating tableaux 
can be the limit for some integer sequences. We prove that 
a vacillating tableau is a limiting one if and only if 
a box is added whenever the index gets from $j+\frac{1}{2}$ 
to $j+1$, for all integer $j$. For instance, for the simplified vacillating tableaux in Example \ref{DI_k's}, the one for $P_{6}^2(\bsy{i})$ is a limiting one, while the ones for $P_{4}^2(\bsy{i})$
and $P_{5}^2(\bsy{i})$ are not.

\begin{proposition} \label{prop: limit}
A sequence of integer partitions $(\lambda^{(j)}: j=0, \frac{1}{2}, 1, 1\frac{1}{2}, \dots, k)$ is the  limiting 
vacillating tableau for some integer sequence of length $k$ if and only if $\lambda^{(0)}=\emptyset$ and the sequence satisfies the following
conditions for each integer $j=0, 1, \dots, k-1$: 
\begin{enumerate}[(a)]
\item $\lambda^{(j)} \supseteq  \lambda^{(j+\frac12)}$ and 
    $| \lambda^{(j)} / \lambda^{(j+\frac12)}|=0$ or $1$, 
    \item $\lambda^{(j+\frac12)}\subseteq \lambda^{(j+1)}$ and 
    $|\lambda^{(j+1)}/ \lambda^{(j+\frac12)}|=1$. 
    
    \label{b-part}
\end{enumerate}
\end{proposition}
\begin{proof} 
From the proof of Theorem~\ref{thm: exist}, 
in the process of applying $DI_m^k$, 
when we use jeu de taquin to delete an entry,  we remove one box either from the first row, in which case the simplified tableau has the same shape,  or from a row below the first, resulting in a simplified tableau with  a box removed. When we perform RSK insertion, since $m$ is sufficiently large, the entry $m$ always stays in the first row,  which guarantees that an entry is bumped  down, that is, a box must be added in the simplified tableau. 
In conclusion,  when $m$ is sufficiently large,  the simplified vacillating tableau satisfies the conditions (a) and (b). 

Conversely, let $P=(\lambda^{(j)}: j=0, \frac{1}{2}, 1, 1\frac{1}{2}, \dots, k)$   be 
a simplified vacillating tableau of length $2k$ that satisfies the conditions listed. Assume there are $\ell$ boxes in $\lambda^{(k)}$, then we have  $\ell \leq k$. Take an integer $n$ such that  $n >2k$. 
Let $\tilde{P} = (\tilde{\lambda}^{(j)})$  be the sequence of integer partitions obtained from $P$ by  adding a row of boxes to each $\lambda^{(j)}$ such that $\tilde{\lambda}^{(j)}$ has $n$ boxes if $j$ is an integer, and $n-1$ boxes if $j$ is a half-integer.  Let $F$ be a SYT of shape $\tilde{\lambda}^{(k)}$ that has $1, 2, \dots, 2k-\ell, 2k+1, 2k+2, \dots, n$ in the first row.  
Since the map $DI_n^k$ is a bijection, 
 there is a unique integer sequence $\bsy{i}=(i_1, \dots, i_k) \in [n]^k$ such that $DI_n^k(\bsy{i})=(F, \tilde{{P}})$.  
 By Condition \ref{b-part}, each time we perform the RSK algorithm to insert $i_j$, 
 an entry is bumped down from the first row. 
 Hence $i_j \neq n$ for all $j$. Furthermore,  $n$ cannot be bumped into the second row during the entire 
 delete-insert process, otherwise it will stay under the first row and be an entry in $F$, 
 which contradicts the construction of $F$. 
 Therefore the entry $n$ must stay at the end of the first row when we apply $DI_n^k(\bsy{i})$. 
 
Now let $m > n$ and compare $DI_n^k(\bsy{i})$ and $DI_m^k(\bsy{i})$.   When we construct the partial tableaux $(T^{(j)})$ in $DI_m^k(\bsy{i})$, 
the integers $n+1, \dots, m$  always stay at the end of the first row and do not interfere with the delete-insert process.  In other words, the simplified vacillating tableau obtained from $DI_m^k(\bsy{i})$ is the same as the one obtained from $DI_n^k(\bsy{i})$, which is ${P}$.
This proves that ${P}$ is a limiting vacillating tableau.  
\end{proof} 

The diagram in Figure~\ref{Diagram} gives the limiting vacillating tableaux of length up to 3.  The number next to each integer partition indicates the number of simplified vacillating tableaux ending at that  integer partition.

\begin{figure}[ht] 
\begin{center} 
    \begin{tikzpicture}
     \node at (0,0) {$k=0:$};  
     \node at (2,0) {$\emptyset$}; 
     \node[red] at (2.3,-0.2) {\small $1$}; 
     \draw[->] (2, -0.3)--(2, -1.1); 
     
     \node at (0, -1.5) {$k=\frac{1}{2}:$}; 
     \node at (2, -1.5) {$\emptyset$}; 
     \node[red] at (2.3, -1.7) {\small{$1$}}; 
     \draw[->] (2, -1.8)--(2, -2.6); 
     
     \node at (0, -3) {$k=1$:}; 
     \draw (1.8, -3.2) rectangle (2.2, -2.8); 
     \node[red] at (2.5, -3.2) {\small $1$}; 
     \draw[->] (2,-3.3)--(2,-4.1); \draw[->](2.2, -3.3)--(3.2, -4.1); 
     
     \node at (0, -4.5) {$k=1\frac{1}{2}:$}; 
     \node at (2, -4.5) {$\emptyset$};
     \node[red] at (2.3, -4.7) {\small $1$}; 
     \draw (3.0, -4.7) rectangle (3.4, -4.3); 
     \node[red] at (3.7, -4.7) {\small$1$}; 
     \draw[->] (2, -4.8)--(2, -5.6); 
     \draw[->] (3.2, -4.8)--(3.2, -5.6);
     \draw[->] (3.4, -4.8)--(4.8, -5.6);

     \node at (0, -6) {$k=2:$}; 
     \draw (1.8, -6.2) rectangle (2.2, -5.8); 
     \node[red] at (2.5, -6.2) {\small$1$}; 
     \draw (3.0, -6.2) rectangle (3.8, -5.8); 
     \draw (3.4, -5.8)--(3.4, -6.2); 
     \node[red] at (4.1, -6.2) {\small$1$}; 
     \draw (4.6, -5.8) rectangle (5.0, -6.6); 
     \draw (4.6, -6.2)--(5.0, -6.2); 
     \node[red] at (5.3, -6.6) {\small$1$}; 
     \draw[->] (2, -6.5)--(2, -7.3); 
     \draw[->] (2.2, -6.5)--(3.2, -7.3); 
     \draw[->] (3.4, -6.5)--(3.4, -7.3); 
     \draw[->] (3.6, -6.5)--(5.2, -7.3); 
     \draw[->] (5.0, -6.7)--(7.2, -7.4); 
     \draw[->] (4.8, -6.7)--(3.6, -7.3);

     \node at (0, -7.9) {$k=2\frac{1}{2}:$}; 
     \node at (2, -7.9) {$\emptyset$}; 
     \node[red] at (2.3, -8.2) {\small$1$}; 
     \draw (3.2, -8.1) rectangle (3.6, -7.7); 
     \node[red] at  (3.9, -8.2) {\small$3$}; 
     \draw (4.8, -8.1) rectangle (5.6, -7.7); 
     \draw (5.2, -8.1)--(5.2, -7.7); 
      \node[red] at (5.9, -8.2) {\small$1$}; 
     \draw (7, -7.6) rectangle (7.4, -8.4); 
       \draw (7, -8)--(7.4, -8); 
       \node[red] at (7.7, -8.2) {\small$1$}; 
      \draw[->] (2, -8.3)--(2, -9.1);  
      \draw[->] (3.4, -8.3)--(3.4,-9.1); 
      \draw[->] (3.6,-8.3)--(4.6, -9.1); 
      \draw[->] (5.2, -8.3)--(5.8, -9.1); 
      \draw[->] (5.4, -8.3)--(7.6, -9.1); 
      \draw[->] (7.2, -8.5)--(7.8, -9.1); 
      \draw[->] (7.4, -8.5)--(9.6, -9.1);

     \node at (0, -9.6) {$k=3:$}; 
     \draw (1.8, -9.8) rectangle (2.2, -9.4); 
      \node[red] at (2.5, -9.8) {\small$1$}; 
     \draw (3.0, -9.8) rectangle (3.8, -9.4); 
      \draw (3.4, -9.8)--(3.4, -9.4); 
      \node[red] at (4.1, -9.8) {\small$3$}; 
     \draw (4.6, -9.4) rectangle (5.0, -10.2); 
       \draw (4.6,-9.8)--(5.0, -9.8); 
       \node[red] at (5.3, -10.2) {\small$3$}; 
     \draw (5.8, -9.8) rectangle (7, -9.4); 
        \draw (6.2,-9.8)--(6.2,-9.4) (6.6,-9.8)--(6.6,-9.4); 
        \node[red] at (7.3, -9.8) {\small$1$}; 
    \draw (7.8, -9.8) rectangle (8.6,-9.4); 
    \draw (7.8,-9.4) rectangle (8.2,-10.2); 
    \node[red] at (8.6, -10.2) {\small$2$}; 
    \draw (9.4, -9.4) rectangle (9.8, -10.6); 
      \draw (9.4, -9.8) rectangle (9.8, -10.2); 
      \node[red] at (10.1, -10.6) {\small$1$}; 
    \end{tikzpicture}
    \caption{Limiting vacillating tableaux of length up to $3$.}  \label{Diagram} 
    \end{center} 
    \end{figure}
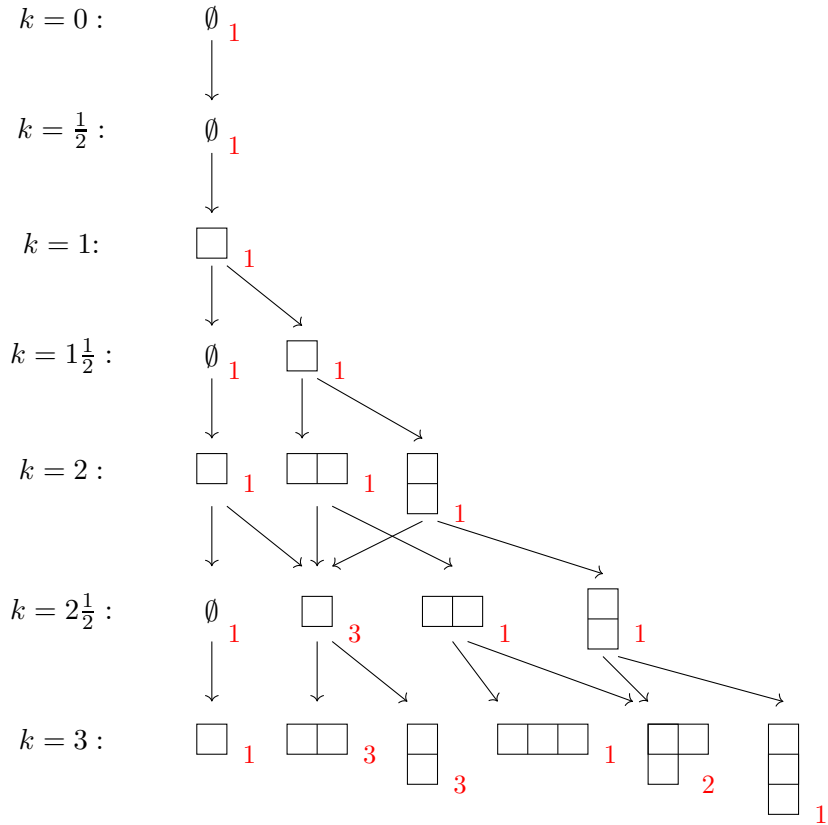

\section{Enumeration of the limiting vacillating tableaux }\label{sec:3}
Let $a_k$ be the number of limiting vacillating tableaux of length $2k$. We set $a_0=1$. The initial values of the infinite sequence $(a_k)_{k=0}^\infty$ are $1, 1, 3, 11, 49,257, \dots.$ Checking the data on OEIS \cite{OEIS} reveals that these values coincide with the initial values of the sequence \href{https://oeis.org/A004211}{A004211}, which is described as ``shifts one place left under 2nd-order binomial transformation" and ``equals the eigensequence of the box of Pascal's triangle.'' 
A combinatorial interpretation  of the sequence A004211 is
given by Goyt and Pudwell \cite{GP11}, who proved that the $k$-th term, denoted $c_k$, counts the number of arrangements of the multiset $\{ 1, 1, 2, 2, \dots, k,k\}$ satisfying the following two conditions: 
\begin{enumerate}[(a)]
    \item All entries between the two occurrences of any given value $i$ exceed $i$, and 
    \item No three entries increase from left to right with the last two adjacent.
\end{enumerate}
Furthermore, $c_k$  can be computed by the formula 
\begin{equation} \label{A004211} 
c_k=\sum_{m=1}^k 2^{k-m} \begin{Bmatrix}
k\\m
\end{Bmatrix}, 
\end{equation} 
where $\genfrac\{\}{0pt}{2}{k}{m}$ is the Stirling number of the second kind that counts the number of set partitions of $[k]$ into $m$ blocks. 
Let $\mathcal{P}_k$ be the set of set partitions of the set $[k]$ such that each element is colored either red or blue, and for each block the minimal element is colored red. We call such a set partition a \emph{bi-colored set partition of $[k]$}. 
Clearly the summation in \eqref{A004211} counts the number of bi-colored set partitions in $\mathcal{P}_k$. Goyt and Pudwell proved \eqref{A004211} by constructing a bijection between the set $\mathcal{P}_k$ and the set of the special arrangements of the multiset $\{ 1, 1, 2, 2, \dots, k, k\}$ described above.

We claim that the sequence $(a_k)_{k=0}^\infty$ is exactly A004211. 
To prove this, first we give an explicit formula for $a_k$ in terms of 
Stirling numbers of the second kind and numbers of standard Young tableaux. Then we show this formula is equivalent to \eqref{A004211} through a bijective proof. 

\begin{proposition}  \label{summation2} 
Let $k$ be a positive integer. Then 
\begin{equation} \label{eq: limit2} 
a_k = \sum\limits_{j=1}^k \Bigg(\begin{Bmatrix}
k\\j
\end{Bmatrix}\sum\limits_{\lambda \vdash j}f^\lambda\Bigg).
\end{equation} 
\end{proposition}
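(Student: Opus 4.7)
The plan is to prove a stronger refinement of the formula: for every partition $\lambda$ with $|\lambda| \leq k$, the number $a_k(\lambda)$ of limiting vacillating tableaux of length $2k$ ending at shape $\lambda$ satisfies $a_k(\lambda) = \begin{Bmatrix}k\\|\lambda|\end{Bmatrix} f^\lambda$. Proposition~\ref{summation2} will then follow immediately by summing this refinement first over $\lambda$ with $|\lambda|=j$ and then over $j$. I will proceed by induction on $k$, the base case $k=0$ being trivial since $a_0(\emptyset)=1=\begin{Bmatrix}0\\0\end{Bmatrix} f^\emptyset$.

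For the inductive step, I classify the last transition in a limiting VT ending at $\lambda$ according to Proposition~\ref{prop: limit}: it is either (i) a pure addition of a box from some $\mu$ with $\mu \subset \lambda$ and $|\lambda/\mu|=1$, or (ii) a deletion from some $\mu$ of the same size as $\lambda$ through an intermediate shape $\nu$, followed by an addition. Counting contributions of each type gives the recursion
$$a_k(\lambda) = \sum_{\mu \subset \lambda,\, |\lambda/\mu|=1} a_{k-1}(\mu) \;+\; \sum_{\mu:\, |\mu|=|\lambda|} c(\mu,\lambda)\, a_{k-1}(\mu),$$
where $c(\mu,\lambda) := |\{\nu : \nu \subset \mu,\, \nu \subset \lambda,\, |\mu/\nu|=|\lambda/\nu|=1\}|$. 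Substituting $a_{k-1}(\mu)=\begin{Bmatrix}k-1\\|\mu|\end{Bmatrix} f^\mu$ from the inductive hypothesis reduces the task to a pair of identities among the $f^\lambda$-numbers.

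The first identity is the classical $\sum_{\mu \subset \lambda,\, |\lambda/\mu|=1} f^\mu = f^\lambda$, obtained by examining the position of the largest entry in a SYT of shape $\lambda$. The second is the dual identity $\sum_{\mu \supset \nu,\, |\mu/\nu|=1} f^\mu = (|\nu|+1) f^\nu$, which follows at once from the branching rule for the induced $S_n$-representation $\mathrm{Ind}_{S_{|\nu|}}^{S_{|\nu|+1}} V^\nu$, or by a direct bijective argument. Swapping the order of summation in the second term of the recursion and using both identities, with $j=|\lambda|$,
$$\sum_\mu c(\mu,\lambda) f^\mu \;=\; \sum_{\nu \subset \lambda,\, |\lambda/\nu|=1}\ \sum_{\mu \supset \nu,\, |\mu/\nu|=1} f^\mu \;=\; j \sum_{\nu \subset \lambda,\, |\lambda/\nu|=1} f^\nu \;=\; j\, f^\lambda.$$
Combined with the Stirling recurrence $\begin{Bmatrix}k\\j\end{Bmatrix} = \begin{Bmatrix}k-1\\j-1\end{Bmatrix} + j\begin{Bmatrix}k-1\\j\end{Bmatrix}$, the two terms of the recursion collapse to $f^\lambda \begin{Bmatrix}k\\j\end{Bmatrix}$, closing the induction. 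The main obstacle is the dual identity $\sum_{\mu \supset \nu,\, |\mu/\nu|=1} f^\mu = (|\nu|+1) f^\nu$; besides invoking this standard fact, the argument is a routine computation.
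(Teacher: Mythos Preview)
Your proof is correct, and it takes a genuinely different route from the paper's. The paper proves the proposition by a bijection: adapting the map $\psi$ from \cite{CDDSY07}, it shows that limiting vacillating tableaux of length $2k$ are in bijection with pairs $(\bsy{B},T)$ where $\bsy{B}$ is a set partition of $[k]$ and $T$ is a partial tableau with $\mathrm{content}(T)=\max(\bsy{B})$; counting such pairs gives the formula directly. Your argument is instead purely enumerative: you prove the shape-refined identity $a_k(\lambda)=\genfrac\{\}{0pt}{1}{k}{|\lambda|}f^\lambda$ by induction on $k$, reading off a recursion from Proposition~\ref{prop: limit} and collapsing it via the two branching identities for $f^\lambda$ and the Stirling recurrence. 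The paper's approach is more structural and yields an explicit correspondence that is reused later (e.g.\ it motivates the enriched set-partition interpretation in Question~3), while your approach is lighter in machinery, avoiding RSK entirely and resting only on the standard identities $\sum_{\mu\lessdot\lambda}f^\mu=f^\lambda$ and $\sum_{\mu\gtrdot\nu}f^\mu=(|\nu|+1)f^\nu$. Both arguments in fact establish the same refinement $a_k(\lambda)=\genfrac\{\}{0pt}{1}{k}{|\lambda|}f^\lambda$.
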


\begin{proof}
We will show that there is a bijection between the set of limiting vacillating tableaux  of length   $2k$  and the set of all pairs
$$
\{(\bsy{B},T): \bsy{B} \mbox{ is a set partition of  $[k]$ and   $T$  is a partial tableau with } \textrm{content}(T)=\max(\bsy{B}) \},
$$ 
where $\max(\bsy{B})$ is the set of maximal elements in the blocks of $\bsy{B}$. 
The bijection is based on the  one constructed in \cite[Section 2]{CDDSY07} to count the total number of simplified vacillating tableaux. Here we restrict the bijection to the set of limiting vacillating tableaux as described in Proposition~\ref{prop: limit}. We remark that the basic operation of this bijection  was first given by
Sundaram \cite{Sunderam90} in the study of the Cauchy identity in the symplectic group $Sp(2n)$. 

\medskip 

\noindent 
\textbf{The Bijection $\psi$ from  limiting vacillating tableaux to pairs $(\bsy{B},T)$}.  \ \ 
Given a simplified vacillating tableau  ${P} = (\emptyset=\lambda^{(0)}, \lambda^{(\frac{1}{2})}, \lambda^{(1)},\dots, \lambda^{(k)})$ satisfying the two conditions listed in Proposition~\ref{prop: limit}, we will recursively define a sequence $(E_0, T_0), (E_{\frac{1}{2}}, T_{\frac{1}{2}}), 
\dots, (E_{k}, T_{k})$ where $E_i$  is a set of ordered pairs of integers in $[k]$ (which are viewed as ``edges"), and $T_j$ is a partial tableau of shape $\lambda^{(j)}$.  Let $E_0$  be the empty set and  $T_0$ be the empty tableau (on the empty alphabet). For each integer $j=1,2, \dots, k$, 
\begin{enumerate}[(a)] 
   \item If $\lambda^{(j-\frac{1}{2})} = \lambda^{(j-1)}$, then $(E_{j-\frac{1}{2}}, T_{j -\frac{1}{2}}) = (E_{j-1}, T_{j-1})$.  
   \item  If $\lambda^{(j-\frac{1}{2})} \subsetneq  \lambda^{(j-1)}$,  let $T_{j-\frac{1}{2}}$ be the unique partial tableau (on a suitable alphabet) of shape $\lambda^{(j-\frac{1}{2})}$  such that $T_{j-1}$ is obtained from $T_{j-\frac{1}{2}}$  by row-inserting some integer $m$. Note that $m$ must be less
than $j$. Let $E_{j-\frac{1}{2}}$ be obtained from $E_{j-1}$ by adding the ordered pair $(m, j)$.
  \item  For a limiting vacillating tableau we always have 
   $\lambda^{(j)} \supsetneq  \lambda^{(j-\frac{1}{2})}$. 
   Let $E_j =E_{j-\frac{1}{2}}$. 
and $T_j$ is obtained from $T_{j-\frac{1}{2}}$ by adding the entry $j$ in the box  
$ \lambda^{(j)}  / \lambda^{(j-\frac{1}{2})}$. 

\end{enumerate} 
It is clear from the above construction that $E_0 \subseteq E_{\frac{1}{2}} \subseteq \cdots \subseteq  E_{k}$. 
Let $G=(V, E_k)$ be a graph with the vertex set $V=[k]$ and edge set $E_k$, and $\bsy{B}$ be the set partition of $[k]$ whose blocks are connected components of $G$. 
Finally we define $\psi({P}) = (\bsy{B}, T_k)$.

In \cite{CDDSY07} it is shown that  $\psi$ is a bijection from the set of all simplified vacillating tableaux of length $2k$ to the pairs of $(\bsy{B}, T)$ where $\bsy{B}$ is a set partition of $[k]$ and $T$ is a partial tableau  with content$(T) \subseteq \max(\bsy{B})$. 
The equality may not hold for a general simplified vacillating tableau, since it allows $\lambda^{(j)} = \lambda^{(j-\frac{1}{2})}$
for an integer $j$. (In that case just let $E_j=E_{j-\frac{1}{2}}$ and $T_j = T_{j-\frac{1}{2}}$.)  
 If ${P}$ is a limiting vacillating tableau,  then each integer $i$ either appears as the first component of an ordered pair in $E_k$,  or appears  in 
the last tableau $T_k$.  Hence in this case $T=T_k$ is a partial tableau  with 
content$(T)=\max(\bsy{B})$. 

Now we count the number of pairs $(\bsy{B}, T)$. 
{  For each integer $j \in [k]$, there  are $\sum\limits_{\lambda \vdash j} f^{\lambda}$ choices for STYs with $j$ boxes  and $\displaystyle\genfrac\{\}{0pt}{2}{k}{j}$ set 
 partitions of $[k]$ with $j$ blocks.}  Replacing the content set $[j]$ with the set of maximal block elements in the set partition yield the pair $(\bsy{B}, T)$. 
 Hence \eqref{eq: limit2}  follows. 
\end{proof}

\begin{example}
An example of the map $\psi$ follows. Let the limiting
vacillating tableau ${P}$ be the following sequence with $k=4$: 
\begin{center} 
\begin{tikzpicture}
\node at (0,0) {$\emptyset$}; 
\node at (1,0) {$\emptyset$}; 
\draw (2,-0.25) rectangle (2.5, 0.25);
\draw (3, -0.25) rectangle (3.5, 0.25);
\draw (4, -0.25) rectangle (4.5, 0.25); \draw (4,-0.25) rectangle (4.5,-0.75); 
\draw (5, -0.25) rectangle (5.5, 0.25);
\draw (6, -0.25) rectangle (6.5, 0.25); \draw (6.5, -0.25) rectangle (7, 0.25);
\draw (7.5, -0.25) rectangle (8, 0.25);
\draw (8.5, -0.25) rectangle (9, 0.25); \draw (8.5, -0.25) rectangle (9, -0.75);
\end{tikzpicture}
\end{center} 

Then the sequence of $(E_i, T_i)$ is 
\begin{center} 
\begin{tikzpicture}
\node at (-2, 1) {$i$:}; 
\node at (0,1) {$0$}; 
\node at (1,1) {$\frac{1}{2}$}; 
\node at (2.25,1) {$1$}; 
\node at (3.25, 1) {$1\frac{1}{2}$}; 
\node at (4.25, 1) {$2$}; 
\node at (5.25, 1) {$2\frac{1}{2}$}; 
\node at (7, 1) {$3$}; 
\node at (8.75, 1) {$3\frac{1}{2}$}; 
\node at (10.25, 1) {$4$};
\node at (-2,0) {$T_i$:}; 
\node at (0,0) {$\emptyset$}; 
\node at (1,0) {$\emptyset$}; 
\draw (2,-0.25) rectangle (2.5, 0.25); \node at (2.25,0) {1}; 
\draw (3, -0.25) rectangle (3.5, 0.25); \node at (3.25,0) {1};
\draw (4, -0.25) rectangle (4.5, 0.25); \draw (4,-0.25) rectangle (4.5,-0.75); 
   \node at (4.25,0) {1}; \node at (4.25, -0.5) {2}; 
\draw (5, -0.25) rectangle (5.5, 0.25); \node at (5.25,0) {2}; 
\draw (6.5, -0.25) rectangle (7, 0.25); \draw (7, -0.25) rectangle (7.5, 0.25);
   \node at (6.75, 0) {2}; \node at (7.25,0) {3}; 
\draw (8.5, -0.25) rectangle (9, 0.25);  \node at (8.75,0) {2}; 
\draw (10, -0.25) rectangle (10.5, 0.25); \draw (10, -0.25) rectangle (10.5, -0.75);
   \node at (10.25,0) {2}; \node at (10.25, -0.5) {4}; 
   
 \node at (-2, -1.5) {$E_i$:};   
   \node at (0, -1.5) {$\emptyset$}; 
   \node at (1, -1.5) {$\emptyset$}; 
   \node at (2.25, -1.5) {$\emptyset$}; 
   \node at (3.25, -1.5) {$\emptyset$}; 
   \node at (4.25, -1.5) {$\emptyset$}; 
   \node at (5.25, -1.5) {$\{(1,3)\}$}; 
   \node at (7, -1.5) {$\{(1,3)\}$}; 
    \node at (8.75, -1.5) {$\{(1,3)$}; \node at (8.75,-2) {$(3,4)\}$}; 
   \node at (10.25, -1.5) {$\{(1,3)$}; \node at (10.25,-2) {$(3,4)\}$}; 
\end{tikzpicture}
\end{center} 
and  hence $\psi(P)=(\bsy{B}, T_4)$, where $\bsy{B}$ is the set  partition $\{\{134\}, \{2\}\}$.
\end{example}

\begin{corollary}
Let $k$ be a positive integer. Then 
\begin{eqnarray} \label{cor8} 
a_k = \sum\limits_{j=1}^k \Bigg(\begin{Bmatrix}
k\\j
\end{Bmatrix}(\# \mbox{ involutions in $\mathfrak{S}_j$})\Bigg).
\end{eqnarray} 
It follows that  the exponential generating function of the sequence $(a_k)_{k=0}^\infty$ is
\begin{eqnarray} \label{egf} 
A(x)=\sum_{n \geq 0} a_k  \frac{x^k}{k!} = \exp\left( \frac{e^{2x}-1}{2}\right). 
\end{eqnarray} 
\end{corollary}
\begin{proof}
Equation~\eqref{cor8}  follows from the fact that the number of involutions in the symmetric group $\mathfrak{S}_j$ equals the number of  SYT with $j$ boxes, which is given by  $\sum\limits_{\lambda \vdash j}f^\lambda$.

By \eqref{cor8} $a_k$ also counts the number of ways to partition a set of $k$ elements into disjoint non-empty blocks, then place an involution on the blocks. By the Compositional Formula of exponential generating functions, $A(x)= G(F(x))$, where $F(x)$ and $G(x)$ are the exponential generating functions for the Bell numbers and the involutions, respectively. See \cite[Theorem 5.1.4]{EC2}.
Explicitly, 
\[
F(x) = \exp(e^x-1), \qquad \qquad G(x) = \exp\left( x + \frac{x^2}{2}\right). 
\]
Putting them together, we obtain the forluma of $A(x)$. 

\end{proof}

 The generating function $A(x)$
matches that of the sequence A004211 given in OEIS, implying that $(a_k)_{k=0}^\infty$ is the sequence A004211.  Next we present a bijective proof using Formulas \eqref{A004211}  and \eqref{cor8}.  

\begin{theorem} \label{2-counting} 
Let $k$ be a positive integer. Then 
\begin{equation}  \label{partitions} 
\sum\limits_{j=1}^k \Bigg(\begin{Bmatrix}
k\\j
\end{Bmatrix}(\# \mbox{ involutions in $\mathfrak{S}_j$})\Bigg)\Bigg) = \sum\limits_{t=1}^k \Bigg(\begin{Bmatrix}
k\\t
\end{Bmatrix}2^{k-t}\Bigg).
\end{equation}
Consequently, the sequence $(a_k)_{k=0}^\infty$ is exactly the OEIS sequence \href{https://oeis.org/A004211}{A004211}. 
\end{theorem}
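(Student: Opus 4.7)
The plan is to prove the identity \eqref{partitions} bijectively. By the corollary immediately preceding this theorem, the left-hand side counts pairs $(\bsy{B}, \sigma)$, where $\bsy{B}$ is a set partition of $[k]$ with some number $j$ of blocks and $\sigma$ is an involution on those blocks; the right-hand side, as explained in the discussion following \eqref{A004211}, counts bi-colored set partitions of $[k]$, i.e.\ the elements of $\mathcal{P}_k$.

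I will construct a bijection $\Phi$ from the first family to $\mathcal{P}_k$. Given $(\bsy{B}, \sigma)$, I form $\Phi(\bsy{B}, \sigma)$ by processing each orbit of $\sigma$ on the blocks of $\bsy{B}$: for a fixed block $B$ of $\sigma$ I retain $B$ as a block and color all of its elements red; for a $2$-cycle $\{B_a, B_b\}$ of $\sigma$, labelled so that $\min(B_a) < \min(B_b)$, I replace $B_a$ and $B_b$ by the single block $B_a \cup B_b$, coloring the elements of $B_a$ red and those of $B_b$ blue. The candidate inverse takes a bi-colored set partition $\bsy{B}'$ and, for every block $C$ of $\bsy{B}'$ that contains at least one blue element, splits $C$ into its red part $R_C$ and its blue part $B_C$, which it declares to form a $2$-cycle of $\sigma$; blocks of $\bsy{B}'$ whose elements are all red are retained as blocks of $\bsy{B}$ and declared fixed points of $\sigma$.

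The step I expect to be the main obstacle is verifying that $\Phi$ really lands in $\mathcal{P}_k$ and that the two constructions invert each other. For the first point, the minimum of a merged block $B_a \cup B_b$ is $\min(B_a) \in B_a$, which is colored red by construction, so the defining property of $\mathcal{P}_k$ (that the minimum of every block is red) is preserved. For the second point, in the inverse direction the red part $R_C$ of a split block always contains $\min(C)$ and satisfies $\min(R_C) = \min(C) < \min(B_C)$, which matches the ordering convention used by $\Phi$; this will make it straightforward to check that re-merging $R_C$ and $B_C$ reproduces $C$ together with its original coloring, and that re-splitting $B_a \cup B_b$ by color recovers the pair $(B_a, B_b)$ with the same matching in $\sigma$. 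Once $\Phi$ is established as a bijection, \eqref{partitions} follows, and since its right-hand side is the formula \eqref{A004211} for the $k$-th term of OEIS sequence A004211, we conclude that $(a_k)_{k=0}^\infty$ coincides with this sequence.
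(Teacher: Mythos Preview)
Your proposal is correct and is essentially the same bijection as the paper's: the paper also merges the two blocks in each $2$-cycle of $\sigma$ and colors an element red exactly when it lies in the same original block as the minimum of the merged block, which amounts to coloring $B_a$ red and $B_b$ blue under your convention $\min(B_a)<\min(B_b)$; the inverse in both cases splits each block by color and pairs the resulting pieces.
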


\begin{proof}
For a set partition of $[k]$ with $j$ blocks, we 
order the blocks according to the minimal element in each block. That is, 
if $\bsy{B} = \{B_1, \dots, B_j\}$ is a set partition of $[k]$, then $\min{B_1} < \min{ B_2} < \cdots < \min {B_j}$. Let $\sigma \in \mathfrak{S}_j$ be an involution. Then   $\sigma$ is a product of disjoint 1 or 2-cycles.
Denote by $\mathcal{B}_k$ the set of all pairs 
$\{( \bsy{B}, \sigma)\}$ where $\bsy{B}$ is a set partition of $[k]$ and $\sigma$ is an involution whose length equals the number of blocks of $\bsy{B}$. 

Let $\mathcal{P}_k$ be the set of bi-colored set partitions of $[k]$ such that each number in $[k]$ is colored either red or blue, and the minimal element of each block is colored red.  
By \cite{GP11}, $\mathcal{P}_k$ 
is enumerated by the OEIS sequence \href{https://oeis.org/A004211}{A004211} and 
$|\mathcal{P}_k|$ can be expressed by the right side of \eqref{partitions}. 

We define  a bijection $\phi: \mathcal{B}_k
\mapsto \mathcal{P}_k$  as follows. Let 
$(\bsy{B}, \sigma)  \in \mathcal{B}_k$ where $B=\{B_1, \dots, B_j\}$. 

\begin{enumerate}[(a)]
    \item Construct a set partition.  
    For every 2-cycle $(a,b)$ in $\sigma$, we form a new block by merging $B_a$ with $B_b$. For every 1-cycle $(r)$ in $\sigma$, we keep $B_r$ as a new block.  The collection of all the new blocks forms $\bsy{B'}$. 
    
    \item In each block $B$ of $\bsy{B}'$,  color the  element $\min{B}$  red. For each non-minimal element $x$ of $B$,  color it red if and only if 
    $x$ and $\min{B}$ are in the same block of $\bsy{B}$. Otherwise, $x$ is blue. 
     \end{enumerate}
This gives a bi-colored set partition in the set $\mathcal{P}_k$. 

Conversely, for a bi-colored set partition 
$\bsy{B'}$ in $\mathcal{P}_k$, where the minimal element in each block is red, we can recover the pair $(\bsy{B}, \sigma)$ by the following steps. 
\begin{enumerate}[(a')]
    \item First for each block of $\bsy{B'}$, split it into two blocks by putting the red elements into one block and blue elements into another block. If there is no blue elements, then we just keep it as one block. 
    
    The collection of all the blocks obtained this way forms  $\bsy{B}$. 
    
    \item Assume there are $j$ blocks in $\bsy{B}$. 
    Label the blocks of $\bsy{B}$ as $B_1, B_2, \dots, B_j$ according to their minimal elements so that
     $\min{B_1} < \min{B_2} < \cdots < \min{B_j}$.  For any block $B_i$,  if there is another block $B_k$ such that $B_i \cup B_k$ is one   block of $\bsy{B'}$, then $(i, k)$ is a 2-cycle of $\sigma$. Otherwise, $(i)$ is a 1-cycle of $\sigma$. This creates an involution $\sigma$ in $\mathfrak{S}_j$. 
\end{enumerate}

Note that this process gives $\phi^{-1}$ which proves that $\phi$ is bijective and \eqref{partitions} follows.
\end{proof}

\begin{example} 
We give an example illustrating the bijection 
$\phi$ in the proof of Theorem~\ref{2-counting}. Let $k=10$,  $\bsy{B}=\{ \{ 1, 3, 4, 8\}, \{2, 5\}, \{ 6\}, \{ 7, 9, 10\}\}$, and 
$\sigma=(14)(23)$ in cycle notation.  Then the set 
partition $\bsy{B'}$ is obtained from $\bsy{B}$ by merging $B_1$ with $B_4$ and merging $B_2$ with $B_3$. The coloring is indicated by the superscript: 
$\bsy{B'}=\{ \{ 1^r, 3^r, 4^r, 7^b, 8^r, 9^b, 10^b\}, \{ 2^r, 5^r, 6^b\}\}$. 
\end{example}

\section{Final Remarks and Future Projects} 

The limiting vacillating tableaux induce an equivalence relation on the integer sequences. 
So it is natural to consider the following problem. 

\medskip 

\noindent \emph{Problem 1. When do two integer sequences have the same limiting vacillating tableau?}  

\medskip 
For $k=2$ we have a complete answer, 
which can be checked easily.    

\begin{example}
Let $P^*(\bsy{i})$ be the limiting vacillating tableau of the integer sequence $\bsy{i}= (i_1, i_2)$, where both $i_1$ and $i_2$ are positive integer. Then
\begin{enumerate}[(a)]
    \item $P^*(\bsy{i})=( \emptyset, \emptyset, \square, \emptyset, \square)$ if and only if $i_2=i_1+1$ or $i_1=i_2=1$. 
    \item $P^*(\bsy{i})=( \emptyset, \emptyset, \square,  \square, \ytableausetup{boxsize=7px}\ydiagram{2})$ if and only if $i_1=i_2 >1$,  or $i_1 < i_2-1$. 
    \item $P^*(\bsy{i})=( \emptyset, \emptyset, \square,  \square, \ytableausetup{aligntableaux=center,boxsize=7px}\ydiagram{1,1})$ if and only if $i_1 > i_2$.
 \end{enumerate}
\end{example}

The answer for longer sequences is much more complicated and the general case remains open.   The  proofs of Proposition \ref{prop: limit}, \ref{summation2}, 
and Theorem \ref{2-counting} suggest some possible approaches and related questions, which we plan to investigate next. 

In the proof of Proposition \ref{prop: limit}, for a simplifed vacillating tableau $P$ satisfying the conditions listed,  we constructed an integer 
sequence $\bsy{i}$ such that $P^*(\bsy{i})=P$, using the bijective map $DI_n^k$ with $n > 2k$. In the proof we used a SYT $F$, which is clearly not unique. Hence a related problem is: 

\medskip 

\noindent \emph{Problem 2. Fix $n$ and $k$ with $n \geq 2k$. Determine which integer sequences correspond to the same vacillating tableau under the bijection $DI_n^k$. That is, find the criterion such that for two integer sequences
$\bsy{i}$ and $\bsy{j}$, $DI_{n}^k(\bsy{i})$ and  $DI_n^k(\bsy{j})$ yield the same shape $\lambda$ and 
$P_{n}^k(\bsy{i}) = P_{n}^k(\bsy{j})$. 
} 

\medskip 
In Section~\ref{sec:3}, we find bijections from the set of limiting vacillating tableaux of length $2k$ 
to certain families of enriched set partitions, for example,  the set of set partitions each with an involution built on its blocks, or the set  $\mathcal{P}_k$ of bi-colored set partitions.  
This suggests the following problem. 

\medskip 

\noindent \emph{Problem 3. What can one say about the integer sequences based on some given information on the structures of the corresponding enriched set partitions?  For example, does the number of blocks, or the number of fixed points in the involution, have an interpretation in the integer sequences? 
} 

\section*{Acknowledgement}
 We thank ICERM for hosting  the ``Research Community in Algebraic Combinatorics" program in 2021, which facilitated and supported this research project.  The second author is supported by an EDGE Karen Uhlenbeck fellowship, and the fourth author is supported in part by the Simons Collaboration Grant for Mathematics 704276.

\end{document}